\newenvironment{proof}[1][Proof]{\textbf{#1 }}{$\square$}
\newtheorem{theorem}{Theorem}
\newtheorem{definition}[theorem]{Definition}
\newtheorem{proposition}[theorem]{Proposition}
\begin{document}

\title{SEIBERG-WITTEN LIKE EQUATIONS ON $5-$DIMENSIONAL CONTACT METRIC MANIFOLDS\footnote{This paper was supported by the project 1105F099 of the Scientific Research Foundation of Anadolu University.}
}
\date{}
\author{Nedim DE\v{G}\.{I}RMENC\.{I} and \c{S}enay BULUT\\ Department of Mathematics, Science Faculty, Anadolu University,\\  Eskisehir, Turkey\\[0pt]
\texttt{ndegirmenci@anadolu.edu.tr, skarapazar@anadolu.edu.tr}\\[0pt]}
\maketitle
\begin{abstract}
In this paper, we write down Seiberg-Witten equations on contact metric manifolds of dimension $5$. Any contact metric manifold has a spin$^c-$structure. For Dirac equation we use Dirac type operators associated to the generalized Tanaka-Webster connection on spin$^c$ spinor bundle of a contact metric manifold. For curvature equation we need to self-duality concept. Self-duality concept is significant on odd dimensional manifolds, particularly, on 5-dimensional contact manifolds. Finally, we give a global solution to these equations on strictly pseudoconvex CR manifolds.

\end{abstract}

\textbf{Key Words} Seiberg-Witten Equations; Spinor; Dirac Operator; Contact metric manifold; Self-duality.\newline
\textbf{2000 MR Subject Classification} 15A66, 58Jxx. \label{first}

\section{Introduction}
Seiberg-Witten equations are defined on four dimensional Riemannian manifolds by Witten in \cite{Witten}. Solution space of these equations gives differential topological invariants for 4-manifolds \cite{Akbulut,Morgan}. On the other hand, there are some generalizations of Seiberg-Witten monopole equations to higher dimensional manifolds \cite{BDK1, ND, Tian2}.

 Seiberg Witten equations consist of two equations. First one is the Dirac equation, to able to write this equation the manifold must have spin$^c-$structure. The second one is the curvature equation which couples the self-dual part of a connection 2-form with a spinor field. To able to write the curvature equation the self-duality notion of a $2-$form is needed. This notion is meaningful for $4-$dimensional Riemannian manifolds. On the other hand, there are similar self-duality notions for some higher dimensional manifolds \cite{Corigan,Tian1}. In the present paper we propose Seiberg-Witten like equations for $5-$dimensional contact metric manifolds. Because contact metric manifolds have spin$^c-$structure \cite{petit} and a kind of self-duality notion of a $2-$form was defined on contact metric $5-$manifolds \cite{Bellettini}. A contact metric structure on a contact manifold is a tuple $(M,g_{\eta},\eta,\xi,J)$ where $\eta$ is a contact form, $J$ is an almost complex structure on the contact distribution $H=ker\ \eta$ and $g_{\eta}$ is a Riemannian metric compatible with both almost complex structure and the contact form such that $$g_{\eta}(X,Y)=d\eta(X,JY)$$for any $X,Y\in \Gamma(H)$. A contact metric manifold is a contact manifold equipped with the contact metric structure. Any contact metric manifold has a canonical spin$^c-$structure associated to its contact metric structure. The generalized Tanaka-Webster connection on the contact metric manifold is very important because this connection preserves both the contact form and the compatible with the metric. Moreover, if it also preserves the almost complex structure, then we call it strictly pseudoconvex CR manifold. When it is given a spin$^c-$structure on the contact metric manifold, a spinorial connection on the associated spinor bundle  can be defined with the aid of the generalized Tanaka-Webster connection together with a connection on the determinant line bundle. These connections have been extensively studied in \cite{petit}. A Dirac operator associated to such a connection is formally self-adjoint for the natural inner product on the spinor bundle. Hence, we can define the Kohn-Dirac operator by the restriction to contact distribution.

The purpose of this article is to write down Seiberg-Witten equations on contact metric manifolds of dimension $5$ and give a global solution  for these equations on strictly pseudo-convex CR-manifolds of dimension $5$.

The paper is organized as follows. We begin with a section introducing the basic facts concerning the contact metric manifolds, the generalized Tanaka-Webster connection and CR structure. In the following section, we study self-dual $2-$forms on $5-$dimensional contact metric manifolds. In a fourth section, we discuss the spin$^c-$structures and the spinorial connections on the contact metric manifolds and Dirac type operators associated to the generalized Tanaka-Webster connection. In the final section we explain the curvature equation and the Dirac equation. Therefore, we write down Seiberg-Witten like equations on the contact metric manifolds of dimension $5$. In particular, we give a global solution for these equations on the strictly pseudoconvex CR manifolds.

\section{Contact Metric Manifolds and CR structure }

A contact form on a smooth manifold $M$ of dimension $(2n+1)$ is a $1-$form $\eta$ such that $\eta\wedge (d\eta)^n\neq 0$ everywhere on $M$. The contact form $\eta$ induces a hyperplane subbundle $H$ of the tangent bundle $TM$ given by $H=Ker\ \eta$. The Reeb vector field associated to $\eta$ is the vector field $\xi$ uniquely determined by $\eta(\xi)=1$ and $d\eta(\xi,.)=0$. Then, $(M,\eta)$ is called a contact manifold.

If $(M,\eta)$ is a contact manifold, then the pair $(H,d\eta|_{H})$ is a symplectic vector bundle. We fix an almost complex structure $J$ on $H$ compatible with $d\eta |_H$. Hence, we can define an associated hermitian metric on $H$ by $$g_{\eta,H}(X,Y)=d\eta(X,JY).$$ We can extend $J$ defined on $H$ to an endomorphism of the tangent bundle $TM$ by setting $J\xi=0$. Then, it holds $J^2=-Id+\eta\otimes\xi$. With this in mind, we extend $g_{\eta,H}$ to a Riemannian metric $g_{\eta}$ on $TM$ by setting $$g_{\eta}(X,Y)=d\eta(X,JY)+\eta(X)\eta(Y).$$ The metric $g_{\eta}$ is called Webster metric and said to be associated to $\eta$. Moreover, the following relations hold: $$g_{\eta}(\xi,X)=\xi(X), \ \ \  g_{\eta}(JX,Y)=d\eta(X,Y), \ \ \  g_{\eta}(JX,JY)=g_{\eta}(X,Y)-\eta(X)\eta(Y)$$ for any $X,Y\in \Gamma(TM).$ We call $(M,g_{\eta},\eta,\xi,J)$ a contact metric manifold. (For detailed information see \cite{Baum1, Baum2, petit})

Now we give a definition of the generalized Tanaka-Webster connection on the contact metric manifolds.

\begin{proposition}\cite{petit}
Let $(M,g_{\eta},\eta,\xi,J)$ be a contact metric manifold. Then, there exists a unique affine connection $\nabla$ on $TM$ with torsion $T$ such that
\begin{enumerate}
\item[a.] $\nabla\eta=0$, $\nabla\xi=0$
\item[b.] $\nabla g_{\eta}=0$
\item[c.] $T_H=-d\eta\otimes\xi$ and $T(\xi,\cdot)=\frac{1}{2}(J\circ \mathcal{L}_{\xi}J)$
\item[d.] For any $X,Y,Z\in \Gamma(TM)$ $$g_{\eta}((\nabla_X J)Y,Z)=\frac{1}{2}d\eta(X,N_H(Y,Z))$$ where $(\mathcal{L}_{\xi}J)X=[\xi,JX]-J[\xi,X]$ and $$N(Y,Z)=J^2[Y,Z]+[JY,JZ]-J[Y,JZ]-J[JY,Z]+d\eta(Y,Z)\xi.$$
\end{enumerate}
\end{proposition}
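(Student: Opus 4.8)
The plan is to separate uniqueness from existence, and to prove uniqueness first, since the argument for it also tells us exactly what the connection must be. The key observation is that conditions (b) and (c) already pin $\nabla$ down completely, so that (a) and (d) are to be verified for the resulting connection rather than used to single it out. Indeed, (c) prescribes the torsion on all of $TM$: the relation $T_H=-d\eta\otimes\xi$ fixes $T$ on $H\times H$, the relation $T(\xi,\cdot)=\frac{1}{2}(J\circ\mathcal{L}_{\xi}J)$ fixes it whenever one argument is $\xi$, and antisymmetry together with the splitting $TM=H\oplus\mathbb{R}\xi$ then determines $T$ everywhere. Once $T$ is fully known, metric compatibility (b) forces the Koszul-type formula
$$2g_{\eta}(\nabla_X Y,Z)=Xg_{\eta}(Y,Z)+Yg_{\eta}(Z,X)-Zg_{\eta}(X,Y)+g_{\eta}([X,Y],Z)-g_{\eta}([X,Z],Y)-g_{\eta}([Y,Z],X)+g_{\eta}(T(X,Y),Z)-g_{\eta}(T(X,Z),Y)-g_{\eta}(T(Y,Z),X),$$
whose right-hand side is entirely determined by $g_{\eta}$ and $T$. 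Since $g_{\eta}$ is nondegenerate, $\nabla_X Y$ is uniquely determined, which establishes uniqueness.

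For existence I would simply reverse this computation: define $\nabla$ by the right-hand side of the displayed identity, using the torsion $T$ read off from (c). One checks in the standard way that this $\nabla$ is a bona fide affine connection (tensorial in $X$, Leibniz in $Y$), that it is metric, and that its torsion equals the prescribed $T$, so that (b) and (c) hold by construction. It is convenient to record this in the equivalent form $\nabla=\nabla^{g}+A$, where $\nabla^{g}$ is the Levi-Civita connection of $g_{\eta}$ and the contorsion tensor $A$ is defined by
$$2g_{\eta}(A(X,Y),Z)=g_{\eta}(T(X,Y),Z)-g_{\eta}(T(X,Z),Y)-g_{\eta}(T(Y,Z),X);$$
this presentation is the most convenient starting point for verifying the two remaining conditions.

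The substantive part of the proof, and the step I expect to be the main obstacle, is checking (a) and especially (d) for this $\nabla$. For (a) I would use the contact metric identities recalled above, together with the standard expression of $\nabla^{g}\xi$ in terms of $J$ and the structure tensor $h=\frac{1}{2}\mathcal{L}_{\xi}J$, to show that the correction $A$ is arranged precisely so that $\nabla\xi=0$; then $\nabla\eta=0$ follows immediately from metric compatibility and $\eta=g_{\eta}(\xi,\cdot)$. The delicate computation is (d): one expands $g_{\eta}((\nabla_X J)Y,Z)$ as $g_{\eta}((\nabla^{g}_X J)Y,Z)$ plus the $A$-corrections and matches the outcome against $\frac{1}{2}d\eta(X,N_H(Y,Z))$. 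This identification relies on rewriting the Nijenhuis-type tensor $N$ through covariant derivatives of $J$ for $\nabla^{g}$ and on the compatibility $d\eta(X,JY)=g_{\eta,H}(X,Y)$; the non-integrability of the almost CR structure is exactly what keeps $\nabla J$ from vanishing and produces the $N_H$ term. I expect this tensorial bookkeeping, carried out separately on the $H$- and $\xi$-components, to be the only genuinely laborious part, the remainder being the routine metric-connection-with-prescribed-torsion argument.
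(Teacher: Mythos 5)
The paper does not prove this proposition at all: it is quoted directly from Petit's article, so there is no in-paper argument to compare yours against. That said, your strategy is the standard one for results of this type and is essentially how the statement is established in the sources: a metric connection is uniquely determined by its torsion via the generalized Koszul formula, and condition (c) does prescribe $T$ on all of $TM$ once antisymmetry and the splitting $TM=H\oplus\mathbb{R}\xi$ are used, so uniqueness follows from (b) and (c) alone. Your Koszul-type formula and your contorsion formula are both correct as written, and the observation that (a) and (d) must then be \emph{verified} rather than \emph{imposed} is the right way to read the statement.

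The reservation is that, on this route, the entire mathematical content of the proposition sits in the step you defer. Once the torsion is fully prescribed, the system (a)--(d) is overdetermined, and its consistency is exactly what has to be proved; it is not automatic. For (a) you need the contact metric identity $\nabla^{g}_{X}\xi=-JX-JhX$ with $h=\tfrac{1}{2}\mathcal{L}_{\xi}J$ and must check that the contorsion built from $T_{H}=-d\eta\otimes\xi$ and $T(\xi,\cdot)=\tfrac{1}{2}J\circ\mathcal{L}_{\xi}J$ cancels it exactly so that $\nabla\xi=0$ (after which $\nabla\eta=0$ does follow from metric compatibility, as you say). For (d) you need an identity expressing $N_{H}$ through $\nabla^{g}J$ and the prescribed torsion; you assert that such an identity exists but do not derive it, and this is precisely where the non-integrability of $J$ enters and where a sign or normalization error would go undetected. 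Until those two computations are carried out, or explicitly delegated to Petit or Tanno, the existence half of the proposition is not established; the uniqueness half is complete as you have written it.
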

The endomorphism $\tau:=T(\xi,\cdot)$ is called the generalized Tanaka-Webster torsion. Moreover, $\tau$ is self-adjoint with respect to the Webster metric, that is, $$g_{\eta}(\tau X,Y)=g_{\eta}(X,\tau Y)$$ for any $X,Y\in \Gamma(TM)$ and satisfies $\tau(JX)=-J(\tau X)$.

In the final section, we deal with strictly pseudoconvex CR manifolds. Moreover, these manifolds are the special type CR manifolds. Then, now we give definitions of real CR structure and complex CR structure.

\begin{definition}
Let $M$ be a smooth connected manifold of dimension $2n+1$. A complex CR structure on $M$ is a complex subbundle $T_{10}$ of $TM^{\mathbb{C}}$ such that
\begin{enumerate}
\item[a)] dim$_{\mathbb{C}}T_{10}=n$

\item[b)] $T_{10}\cap\overline{T_{10}}=\{0\}$

\item[c)] $[\Gamma(T_{10}),\Gamma(T_{10})]\subset \Gamma(T_{10})$
\end{enumerate}

\end{definition}

\begin{definition}
Let $M$ be a smooth connected manifold of dimension $2n+1$. A real CR structure on $M$ is a pair $(H,J)$ such that

\begin{enumerate}
\item[a)] $H\subset TM$ is a real $2n-$dimensional subbundle,

\item[b)] $J:H\rightarrow H$ is a almost complex structure on $H$; $J^2=-Id$,

\item[c)] If $X,Y\in \Gamma(H)$ then, $[JX,Y]+[X,JY]\in \Gamma(H)$ and $$N(X,Y):=J([JX,Y]+[X,JY])-[JX,JY]+[X,Y]=0$$(Integrability condition)
\end{enumerate}
\end{definition}

If $T_{10}\subset TM^{\mathbb{C}}$ is a complex CR-structure, then $$H:=Re(T_{10}\oplus \overline{T_{10}})$$ and $$J(U+\overline{U}):=i(U-\overline{U})$$ defines a real CR-structure. Conversely, if the pair $(H,J)$ is a real CR-structure, then the eigenspace of the complex extension of $J$ on the complexification of $H$ to the eigenvalue $i$ is a complex CR-structure.
An odd dimensional manifold endowed with a real or complex CR structure is called CR manifold. (see also \cite{Baum1, Baum2})

Let an orientable CR manifold be given. Then, we globally define a form $\eta$ which is nonzero and vanishes on $H$. Moreover, there exists a global vector field $\xi$ in the complement of $H$ such that we regulate $\eta$ as $\eta(\xi)=1$ and $\eta(X)=0$ on $H$. We can define the Levi-form $L_{\eta}$ on $H$ as $$L_{\eta}(X,Y):=d\eta(X,JY)$$ for any $X,Y\in \Gamma(H)$. If the Levi-form is positive definite, then the CR manifold is called as strictly pseudoconvex CR manifold. This is equivalent to the integrability of $J$, i.e, $\nabla J=0$.  If $M$ is a strictly pseudoconvex CR manifold, then we define  a Riemannian metric on $M$ as $$g_{\eta}(X,Y)=L_{\eta}(X,Y)+\eta(X)\eta(Y)$$ for any $X,Y\in \Gamma(H)$. Take $\xi$ as the metric dual of $\eta$. Then we extend $J$ on $M$ by setting $J\xi=0$. Moreover, we have $g_{\eta}(JX,Y)=d\eta(X,Y)$. Since $\eta(H)=0$ and $\eta(\xi)=1$, we have $J^2(X)=-X+\eta(X)\xi$. Hence, $(M,g_{\eta},\eta,\xi,J)$ is a contact metric manifold.

Conversely, if $M$ is a contact metric manifold, then $(H,J|_H)$ satisfies the first two conditions of definition of a real CR structure. But for integrability condition we need to an extra condition. Namely, a contact metric manifold is a CR manifold if and only if the condition $$\eta([JX,Y])=-\eta([X,JY])$$ is satisfied for any $X,Y\in \Gamma(H)$.

\section{Self-dual $2-$forms on 5-dimensional Contact Metric Manifolds}

Let $(M,g_{\eta},\eta,\xi,J)$ be a $5-$dimensional contact metric manifold. For any $2-$form $\alpha \in \Omega^2(M)$ we have the splitting $\alpha=\alpha_H+\alpha_{\xi}$ where $\alpha_H=\alpha \circ \Pi$, $\Pi:TM\rightarrow H$ is the canonical projection and $\alpha_{\xi}=\eta\wedge i(\xi)\alpha$. Then, the decomposition of $\Omega^2(M)$ is given by $$\Omega^2(M)=\Omega_H^2(M)\oplus \eta\wedge\Omega_H^2(M)$$ where $\Omega_H^2(M)$ is the bundle of horizontal 2-forms. Moreover, any horizontal $2-$form can be split in its self-dual and anti-self dual parts as follows:

Let $\ast$ be the Hodge-star operator acting on the cotangent bundle $T^{\ast}M$. We can define the operator $$\star:\Omega^2(M)\rightarrow \Omega^2(M), \ \ \ \star(\beta):=\ast(\eta\wedge \beta).$$We can restrict the operator $\star$ to the space of horizontal $2-$forms $\Omega^2(H)$: $$\star:\Omega_H^2(M)\rightarrow \Omega_H^2(M), \ \ \ \star(\beta):=\ast(\eta\wedge \beta).$$ This operator satisfies $\star^2=id$. Then, we have the following orthogonal decomposition : $$\Omega_H^2(M)=\Omega_H^2(M)^+\oplus\Omega_H^2(M)^-$$ where $\Omega_H^2(M)^{\pm}$ is the eigenspace associated to eigenvalue $\pm 1$ of the operator $\star$. The eigenspace $\Omega_H^2(M)^+$ is called as the space of self-dual $2-$forms. In similar way, the eigenspace $\Omega_H^2(M)^-$ is called as the space of anti-self-dual $2-$forms. (see \cite{Bellettini, Fan1})

\section{Spin$^c-$structures and Dirac Operators on Contact Metric Manifolds }

\subsection{Spin$^c-$structures on Contact Metric Manifolds }
Let $(M,g_{\eta},\eta,\xi,J)$ be a contact metric manifold. The complexification $TM^ {\mathbb{C}}$ of the tangent bundle $TM$ splits into three parts: $$TM^ {\mathbb{C}}=T^ {1,0}M\oplus T^ {0,1}M\oplus \{\mathbb{C}\xi\},$$ where $T^ {1,0}M$ and $T^ {0,1}M$ are the subbundle associated to the eigenspace of the complex extension of $J$ on $H^ {\mathbb{C}}$ to the eigenvalues $i$ and $-i$, respectively.

\begin{proposition}{\cite{petit}}
Any contact metric manifold $(M^{2n+1},g_{\eta},\eta,\xi,J)$ admits a Spin$^c-$structure whose determinant bundle is $\wedge_H^{0,n}M$.

\end{proposition}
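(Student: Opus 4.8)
The plan is to construct the Spin$^c$-structure explicitly using the canonical almost complex structure on the contact distribution, following the standard recipe for almost Hermitian manifolds adapted to the odd-dimensional contact setting. The decomposition $TM^{\mathbb{C}}=T^{1,0}M\oplus T^{0,1}M\oplus\{\mathbb{C}\xi\}$ is the key structural input, because the presence of the canonical line $\mathbb{C}\xi$ (spanned by the Reeb field) is precisely what distinguishes the contact case from the even-dimensional Hermitian case and is what allows a Spin$^c$-structure rather than merely a $\mathrm{U}(n)$-structure to emerge naturally.

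First I would exhibit the candidate spinor bundle. The natural choice is the exterior algebra of the horizontal $(0,1)$-forms, namely $\Sigma=\wedge^{0,\bullet}_H M=\bigoplus_{k=0}^{n}\wedge^{0,k}_H M$, equipped with its Hermitian metric induced from $g_{\eta}$. I would then define Clifford multiplication: a horizontal tangent vector $X\in H$ decomposes into its $(1,0)$ and $(0,1)$ components $X^{1,0}$ and $X^{0,1}$, and I would set $X\cdot\varphi=\sqrt{2}\,(X^{0,1})^{\flat}\wedge\varphi-\sqrt{2}\,\iota_{X^{1,0}}\varphi$, mirroring the Hermitian construction; for the Reeb direction I would define $\xi\cdot\varphi=\pm\ri\,(-1)^{\deg\varphi}\varphi$ (an involution on $\Sigma$, up to sign, reflecting that $\xi$ acts as the complex volume element in odd dimensions). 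The main verification here is the Clifford relation $X\cdot Y\cdot+Y\cdot X\cdot=-2g_{\eta}(X,Y)$, which I would check separately for two horizontal vectors, for a horizontal vector against $\xi$, and for $\xi$ against itself, using the orthogonality of the three summands and the compatibility $g_{\eta}(JX,JY)=g_{\eta}(X,Y)-\eta(X)\eta(Y)$.

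Next I would identify the structure group reduction that this data encodes. The bundle $\Sigma$ is a module over the Clifford bundle $\mathrm{Cl}(TM,g_{\eta})$, and the unitary frames of $\Sigma$ compatible with Clifford multiplication define a principal $\mathrm{Spin}^c(2n+1)$-bundle covering the $\mathrm{SO}(2n+1)$-frame bundle of $TM$; this is because the $\mathrm{U}(n)$-structure furnished by $(H,J,g_{\eta})$ together with the trivialized Reeb line lifts canonically through the standard embedding $\mathrm{U}(n)\hookrightarrow\mathrm{Spin}^c(2n+1)$. Finally, to pin down the determinant line bundle I would compute the associated $\mathrm{U}(1)$-bundle: under this embedding the determinant character sends a unitary frame to the determinant of its action on $\wedge^{n}_H M$, so the determinant line bundle is $(\wedge^{0,n}_H M)$, which is the claimed $\wedge^{0,n}_H M$.

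The step I expect to be the main obstacle is verifying that the sign/involution convention chosen for $\xi\cdot$ is globally consistent and genuinely yields a $\mathrm{Spin}^c$-module rather than forcing an honest $\mathrm{Spin}$-structure or an anomalous obstruction; concretely, one must confirm that the transition functions of the frame bundle of $\Sigma$ lift to $\mathrm{Spin}^c(5)$ with the correct determinant, which amounts to showing the relevant obstruction class is the mod-$2$ reduction of the first Chern class of $\wedge^{0,n}_H M$ and hence is automatically realizable. Since $(H,J)$ already provides a global almost complex structure on the distribution and $\xi$ trivializes the normal line, this obstruction vanishes by construction, but making the cocycle computation airtight—rather than merely heuristic—is the delicate part, and I would lean on the detailed treatment in \cite{petit} to discharge it.
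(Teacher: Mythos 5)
Your construction is correct and coincides with the standard one the paper relies on: the paper itself offers no proof of this proposition (it is quoted from \cite{petit}), but the very next proposition in the text records exactly the Clifford module structure you build, namely $S\cong\wedge_H^{0,*}M$ with $X\cdot\alpha=\sqrt{2}\bigl((X_H^{0,1})^*\wedge\alpha-(X_H^{0,1})\lrcorner\alpha\bigr)+(-1)^{q+1}\ri\,\eta(X)\alpha$, and the identification of the determinant bundle via $\mathrm{U}(n)\hookrightarrow\mathrm{Spin}^c(2n+1)$ is the same route. The only point to watch is the convention in the contraction term (the paper contracts by $X_H^{0,1}$ using the Hermitian pairing where you write $\iota_{X^{1,0}}$), which is a notational rather than substantive difference.
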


\begin{proposition}{\cite{petit}}
The spinor bundle $S$ can be identified with the bundle $\wedge_H^{0,*}M$ of the $(0,*)$ forms and the Clifford multiplication by a vector field $X$ on a $(0,q)$ form $\alpha$ is given by $$X\cdot \alpha=\sqrt{2}((X^ {0,1}_H)^*\wedge \alpha-(X_H^ {0,1})\lrcorner\alpha)+(-1)^{q+1}i\eta(X)\alpha$$ where $X_H$ is the horizontal part of $X$ and $\lrcorner$ is the contraction operator.
\end{proposition}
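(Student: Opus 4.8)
\textbf{Proof } The plan is to reduce the statement to a pointwise algebraic verification. The preceding proposition already produces the canonical spin$^c-$structure whose associated $U(n)$-structure on $H$ (together with the line $\mathbb{C}\xi$ coming from the splitting $TM^{\mathbb{C}}=T^{1,0}M\oplus T^{0,1}M\oplus\{\mathbb{C}\xi\}$) realizes the spinor bundle as $S\cong\wedge_H^{0,*}M$; since $\dim_{\mathbb{C}}\wedge_H^{0,*}M=2^n$ coincides with the dimension of an irreducible $\mathbb{C}\mathrm{l}_{2n+1}$-module, the only genuine work is to write down the Clifford action on $\wedge_H^{0,*}M$ explicitly and to check that it is indeed a Clifford module structure. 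So I would take the proposed right-hand side as the definition of $X\cdot$ and verify the Clifford relation $X\cdot(Y\cdot\alpha)+Y\cdot(X\cdot\alpha)=-2g_\eta(X,Y)\alpha$ for all $X,Y\in\Gamma(TM)$.

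First I would fix a point and split $X=X_H+\eta(X)\xi$, with $X_H=X_H^{1,0}+X_H^{0,1}$ and $X_H^{1,0}=\overline{X_H^{0,1}}$ for real $X$, and abbreviate the action as $X\cdot=\sqrt{2}(\mu_X-\iota_X)+(-1)^{q+1}i\,\eta(X)$, where $\mu_X(\alpha)=(X_H^{0,1})^*\wedge\alpha$ raises the degree and $\iota_X(\alpha)=X_H^{0,1}\lrcorner\,\alpha$ lowers it, the integer $q$ being the degree of the form on which the operator acts. By bilinearity the Clifford relation splits into a horizontal-horizontal, a Reeb-Reeb, and a mixed case. For the horizontal-horizontal case I would use the elementary anticommutators $\{\mu_X,\mu_Y\}=0$, $\{\iota_X,\iota_Y\}=0$ and $\{\mu_X,\iota_Y\}=(X_H^{0,1})^*(Y_H^{0,1})$, so that the full anticommutator collapses to $-2[(X_H^{0,1})^*(Y_H^{0,1})+(Y_H^{0,1})^*(X_H^{0,1})]\,\alpha$; the factor $\sqrt{2}$ is exactly what converts this Hermitian pairing into $-2g_\eta(X_H,Y_H)\,\alpha$, where I would invoke $g_\eta(JX,JY)=g_\eta(X,Y)-\eta(X)\eta(Y)$ on $H$. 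For the Reeb-Reeb case I would note that $\xi\cdot$ acts on a $(0,q)$-form purely by the scalar $(-1)^{q+1}i$ without shifting its degree, so that $\xi\cdot\xi\cdot\alpha=((-1)^{q+1}i)^2\alpha=-\alpha=-g_\eta(\xi,\xi)\,\alpha$.

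The step I expect to be the real obstacle, and the reason the alternating sign $(-1)^{q+1}$ and the precise factor $i$ are forced, is the mixed case $\{X_H\cdot,\xi\cdot\}\alpha=0$, which must reproduce $g_\eta(X_H,\xi)=0$. Here $X_H\cdot$ carries a $(0,q)$-form to a sum of a $(0,q{+}1)$- and a $(0,q{-}1)$-form, on each of which $\xi\cdot$ acts by the scalar attached to that summand's own degree; the bookkeeping gives $\xi\cdot(X_H\cdot\alpha)=(-1)^{q}i\,(X_H\cdot\alpha)$ against $X_H\cdot(\xi\cdot\alpha)=(-1)^{q+1}i\,(X_H\cdot\alpha)$, and these cancel precisely because the degree shift flips the sign. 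Once all three cases close, the operator is a genuine Clifford module structure on $\wedge_H^{0,*}M$, and its identification with $S$ together with the determinant bundle $\wedge_H^{0,n}M$ is inherited from the canonical spin$^c-$structure of the previous proposition. $\square$
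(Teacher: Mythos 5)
The paper does not prove this proposition at all: it is imported verbatim from the reference \cite{petit}, so there is no argument of the authors' own to compare yours against. Judged on its own terms, the algebraic core of your proposal is correct and is essentially the computation underlying Petit's construction. The anticommutators $\{\mu_X,\mu_Y\}=\{\iota_X,\iota_Y\}=0$ and $\{\mu_X,\iota_Y\}=(X_H^{0,1})^*(Y_H^{0,1})\,\mathrm{id}$ do symmetrize to $g_\eta(X_H,Y_H)$ under the standard normalization $X_H^{0,1}=\tfrac12(X_H+\ri JX_H)$ (using $g_\eta(JX,Y)=-g_\eta(JY,X)$ to kill the imaginary part), the Reeb--Reeb case gives $((-1)^{q+1}\ri)^2=-1=-g_\eta(\xi,\xi)$, and your observation that the mixed case closes only because multiplication by a horizontal vector shifts the degree by one, flipping the scalar $(-1)^{q+1}\ri$, is exactly the point that forces the alternating sign in the formula. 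You should, however, make the normalization of $X_H^{0,1}$ and of the dual $(\cdot)^*$ explicit, since the factor $\sqrt{2}$ only works out for that choice.

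The genuine thin spot is the first half of the statement, the identification $S\cong\wedge_H^{0,*}M$, which you assert is ``inherited'' rather than prove. Checking the Clifford relation shows that $\wedge_H^{0,*}M$ is a bundle of $\mathbb{C}\mathrm{l}(TM)$-modules of rank $2^n$, hence fiberwise irreducible; it does not show that this bundle is the spinor bundle of the particular spin$^c$ structure of the preceding proposition. Two issues remain. First, $\mathbb{C}\mathrm{l}_{2n+1}$ has two inequivalent irreducible modules of dimension $2^n$, distinguished by the action of the complex volume element, so ``irreducible of the right rank'' does not even determine the Clifford multiplication up to isomorphism; your formula selects one of the two and one must check it is the one attached to the canonical structure. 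Second, and more importantly, one must match the global data: the spin$^c$ structure is built from the $U(n)$-reduction of the frame bundle determined by $(H,J,g_\eta)$ together with the line $\mathbb{C}\xi$, and the identification of $S$ with $\wedge_H^{0,*}M$ rests on the representation-theoretic fact that under the lift $U(n)\to Spin^c(2n+1)$ the spin representation restricts to $\wedge^{0,*}\mathbb{C}^n$ with the $U(1)$-factor acting through $\det$, which is also what makes the determinant bundle come out as $\wedge_H^{0,n}M$. That equivariance statement is the substantive content carried by the citation to \cite{petit}, and a self-contained proof would need to supply it rather than fold it into the previous proposition.
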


\subsection{Spinorial connections and Dirac type operators on contact metric manifolds}

Let $(M,g_{\eta},\eta,\xi,J)$ be a contact metric manifold equipped with a $Spin^c-$structure. Each unitary connection $A$ on $L$ induces a spinorial connection $\nabla^A$ on $S$ with the generalized Tanaka-Webster connection $\nabla$.

The Kohn-Dirac operator $D_H^A$ is defined as follows:
$$D_H^A=\displaystyle \sum_{i=1}^{2n} e_i\cdot \nabla^A_{e_i}$$ where $\{e_i\}$ is a local orthonormal frame of $H$.

The Dirac operator $D_A$ is defined by

\begin{equation*}
D_A=D_H^A+\xi\cdot \nabla^A_{\xi}.
\end{equation*}

Suppose that $M$ is a strictly pseudo convex CR manifold endowed with a spin$^c-$structure. Let $S=\wedge_H^{0,*}(M)\otimes \mathcal{L}$ be the spinor bundle. Each unitary connection $A$ on the determinant bundle $L$ induces a unitary connection on $\mathcal{L}$. We denote the associated covariant derivative on $\mathcal{L}$ by $\nabla^\mathcal{A}$. Let

$$\Omega_H^{0,*}(M;\mathcal{L})=\Gamma(\wedge_H^{0,*}(M)\otimes \mathcal{L}).$$ We can define the covariant derivative

$$\nabla_W^{\mathcal{A}^q}:\Omega_H^{0,q}(M;\mathcal{L})\rightarrow \Omega_H^{0,q}(M;\mathcal{L})$$ with the rule
$$\nabla_W^{\mathcal{A}^q}(\alpha\otimes z)=(\nabla_W^{q}\alpha)\otimes z+\alpha\otimes \nabla_W^{\mathcal{A}}z$$
where $\nabla_W^{q}\alpha$ is the natural extension of the generalized Tanaka-Webster connection to $\Omega_H^{0,q}(M)$, $z\in \Gamma(\mathcal{L})$ ve $W\in \Gamma(TM^{\mathbb{C}})$. The operators $\overline{\partial}_H^{\mathcal{A}^q}:\Omega_H^{0,q}(M;\mathcal{L})\rightarrow \Omega_H^{0,q+1}(M;\mathcal{L})$ and $\overline{\partial}_H^{{\mathcal{A}^q}^*}:\Omega_H^{0,q}(M;\mathcal{L})\rightarrow \Omega_H^{0,q-1}(M;\mathcal{L})$ are respectively given by

\begin{equation*}
\overline{\partial}_H^{\mathcal{A}^q}=\displaystyle \sum_{i=1}^{n}\overline{Z}_i^*\wedge \nabla_{\overline{Z}_i^*}^{\mathcal{A}^q}
\end{equation*}

\begin{equation*}
\overline{\partial}_H^{{\mathcal{A}^q}^*}=\displaystyle -\sum_{i=1}^{n}i(\overline{Z}_i)(\nabla_{Z_i}^{\mathcal{A}^q}).
\end{equation*}
From \cite{petit} we have on $\Omega_H^{0,*}(M;\mathcal{L})$ the following identities:

$$\nabla_W^A=\displaystyle \sum_{q=0}^d\nabla_W^{\mathcal{A}^q}$$
\begin{equation}\label{11}
\mathcal{D}_H^A=\sqrt{2}\displaystyle \sum_{q=0}^n (\overline{\partial}_H^{\mathcal{A}^q}+\overline{\partial}_H^{{\mathcal{A}^q}^*})
\end{equation} Moreover, by using the equation $\xi\cdot\nabla_{\xi}^A=\displaystyle \sum_{q=0}^n (-1)^{n+1}i\nabla_{\xi}^{\mathcal{A}^q}$ we obtain
\begin{equation*}
\begin{array}{lll}
    \mathcal{D}_A  & =  & \mathcal{D}_H^A+\xi\cdot\nabla_{\xi}^A\\
     \mathcal{D}_A   & = & \sqrt{2}\displaystyle \sum_{q=0}^n (\overline{\partial}_H^{\mathcal{A}^q}+\overline{\partial}_H^{{\mathcal{A}^q}^*}+(-1)^{n+1}i\nabla_{\xi}^{\mathcal{A}^q}).
\end{array}
\end{equation*}
(see also \cite{petit})

\section{Seiberg-Witten Like Equations on $5-$dimensional Contact metric manifolds}
In \cite{DK} Seiberg-Witten like equations on $5-$dimensional Euclidean space $\mathbb{R}^5$ were written. In this section, we will write Seiberg-Witten like equations on $5-$dimensional contact metric manifolds and give a solution to these equations on strictly pseudoconvex CR manifolds. 

For a spinor $\psi$ we define a $2-$form $\sigma(\psi)$ by the formula $$\sigma(\psi)(X,Y)=<X\cdot Y\cdot \psi,\psi>+<X,Y>|\psi|^2,$$ where $X,Y\in\Gamma(M)$. Note that $\sigma(\psi)$ is an imaginary valued $2-$form. $\sigma(\psi)$ can be also defined on $H$. Then, we call it $\sigma_H(\psi)$. Choose a local orthonormal frame field $\{e_1,e_2=J(e_1),e_3,e_4=J(e_3)\}$ of $H$. Then the expression of the $2-$form $\sigma_H(\psi)$ in the local frame field is given by
\begin{equation}\label{10}
\sigma_H(\psi)=\displaystyle \sum_{i<j}<e_ie_j\psi,\psi>e^i\wedge e^j.
\end{equation}

\begin{definition}
Let $(M^5,g_{\eta},\eta,\xi,J)$ be a contact metric $5-$manifold. Fix a spin$^c-$structure and a connection $A$ in the $U(1)-$principal bundle associated with the spin$^c-$structure. For $\psi\in \Gamma(S^+)$ Seiberg-Witten equations are defined by
\begin{equation}\label{12}
\begin{array}{l}
   D_A(\psi)=0       \\
   F_A^+=-\displaystyle\frac{1}{4}\sigma(\psi)^+
\end{array}
\end{equation}
where $F_A^+$ is the self-dual part of the curvature $F_A$ and $\sigma(\psi)^+$ is the self-dual part of the $2-$form $\sigma(\psi)$.

\end{definition}

Now we give a solution for Seiberg-Witten equations in dimension $5$. To do this we follow the method given in \cite{Friedrich}. From now on we suppose that $(M,g_{\eta},\eta,\xi,J)$ is a strictly pseudo-convex CR manifold.

$d\eta(X,Y):=g_{\eta}(JX,Y)$ is a $2-$form for any $X,Y\in \Gamma(TM)$. Choose a local orthonormal frame field $\{e_1,e_2=J(e_1),e_3,e_4=J(e_3),\xi\}$. Then the $2-$form $d\eta$ can be written in the following form: $$d\eta=e^1\wedge e^2+e^3\wedge e^4.$$ Let $(M,g_{\eta})$ be a contact metric manifold endowed with a spin$^c-$structure. Now we consider the following spin$^c$ structure $\kappa: \mathbb{C}l_5\rightarrow End(\mathbb{C}^4)$ coming from representation of the complex Clifford algebra $\mathbb{C}l_5$:

$\kappa(e_1)=\left(
               \begin{array}{cccc}
                 0 & i & 0 & 0 \\
                 i & 0 & 0 & 0 \\
                 0 & 0 & 0 & i \\
                 0 & 0 & i & 0 \\
               \end{array}
             \right)
$,
$\kappa(e_2)=\left(
               \begin{array}{cccc}
                 0 & 1 & 0 & 0 \\
                 -1 & 0 & 0 & 0 \\
                 0 & 0 & 0 & -1 \\
                 0 & 0 & 1 & 0 \\
               \end{array}
             \right)
$,

$\kappa(e_3)=\left(
               \begin{array}{cccc}
                 0 & 0 & 0 & 1 \\
                 0 & 0 & -1 & 0 \\
                 0 & 1 & 0 & 0 \\
                 -1 & 0 & 0 & 0 \\
               \end{array}
             \right)
$,
$\kappa(e_4)=\left(
               \begin{array}{cccc}
                 0 & 0 & 0 & i \\
                 0 & 0 & -i & 0 \\
                 0 & -i& 0 & 0 \\
                 i & 0 & 0 & 0 \\
               \end{array}
             \right)
$,

$\kappa(e_5)=\left(
               \begin{array}{cccc}
                 i & 0 & 0 & 0 \\
                 0 & -i & 0 & 0 \\
                 0 & 0 & i & 0 \\
                 0 & 0 & 0 & -i \\
               \end{array}
             \right)
$,
$\kappa(d\eta)=\left(
               \begin{array}{cccc}
                 0 & 0 & 0 & 0 \\
                 0 & 2i & 0 & 0 \\
                 0 & 0 & 0 & 0 \\
                 0 & 0 & 0 & -2i \\
               \end{array}
             \right)$

Let $(M,g_{\eta})$ be a contact metric manifold endowed with spin$^c-$structure. Then, the spinor bundle is $S=\wedge^ {0,*}_H(M)$. Namely,
$$S=\wedge^ {0,2}_H(M)\oplus \wedge^ {0,1}_H(M)\oplus \wedge^ {0,0}_H(M)$$
where $\wedge^ {0,2}_H(M)$ is the eigenspace corresponding to the eigenvalue $2i$ of the mapping $\kappa(d\eta):S\rightarrow S$ and has dimension one and $\wedge^ {0,1}_H(M)$ is the eigenspace corresponding to the eigenvalue $0$ of the mapping $\kappa(d\eta):S\rightarrow S$ and has dimension $2$ and $\wedge^ {0,0}_H(M)$ is the eigenspace corresponding to the eigenvalue $-2i$ of the mapping $\kappa(d\eta):S\rightarrow S$ and has dimension one.

If $\psi_0\in \wedge^ {0,0}_H(M)$, then $\psi_0$ denotes the spinor corresponding the constant function $1$. In the chosen coordinates
$$\psi_0=\left(\begin{array}{c}
      0    \\
      0   \\
      0\\
      1
\end{array}\right)$$
Moreover, we have $d\eta\cdot \psi_0=-2i\psi_0$. It is given in (\ref{10}) the expression of $\sigma_H(\psi)$ in the local coordinates. By using this we obtain the following identity:
\begin{equation}\label{13}
\sigma_H(\psi_0)=-id\eta
\end{equation}

When $M$ is a strictly pseudo-convex CR manifold, $M$ has also a complex CR structure. Let $s=(Z_1,\ldots,Z_n)$ be a local unitary basis of $T_{10}$ over $U\subset M$. Let us denote by $\omega_s:=(\omega_{\alpha\beta})$ the matrix of connection form of $\nabla$ with respect to frame $s$. Then, we can write in the following way:

 $$\nabla Z_{\alpha}=\displaystyle \sum_{\beta}\omega_{\alpha\beta}Z_{\beta}.$$
$\{Z_1,\ldots,Z_n,\overline{Z}_1,\ldots,\overline{Z}_n,\xi\}$ is a local frame of the complexified tangent bundle $TM^{\mathbb{C}}$ over $U$. Let $\{\theta^1,\ldots,\theta^n,\overline{\theta}^1,\ldots,\overline{\theta}^n,\eta\}$ be the corresponding dual basis. Then,
$$\tau_s=\overline{\theta}^1\wedge\ldots\wedge\overline{\theta}^n:U\rightarrow \Lambda_H^{0,n}(M)$$
is a local section in determinant line bundle $\Lambda_H^{0,n}(M)$. The Webster connection $\nabla$ defines a covariant derivative in the canonical line bundle $\Lambda_H^{0,n}(M)$ such that

$$\nabla(\overline{\theta}^1\wedge\ldots\wedge\overline{\theta}^n)=-Tr(\omega_s)\overline{\theta}^1\wedge\ldots\wedge\overline{\theta}^n$$
Since $\nabla$ is a metric with respect to $g_{\eta}$, the trace $Tr(\omega_s)$ is purely imaginary. Therefore, this connection $\nabla$ in $\Lambda_H^{0,n}(M)$ induce a connection on the associated $S^1-$principal bundle $P_{S^1}$. Let us denote this connection by $A$. Then, $$\tau_s^*A=-Tr \ \overline{\omega_s}=Tr\omega_s$$
is a local connection forms on $S^1-$bundle $P_1$. Let $F_A$ be the curvature form of the connection $A$. The curvature form $F_A$ is a $2-$form on $M$ with values in i$\mathbb{R}$. Over $U\subset M$ we have
\begin{equation}\label{31}
  F_A=dA=Tr d\omega_s
\end{equation}
Moreover,
\begin{equation}
  Ric(X,Y)=Tr(d\omega_s)-Tr(\omega_s\wedge \omega_s)=Trd\omega_s.
\end{equation}
From (\ref{31}) it follows
\begin{equation}\label{5}
  F_A=Ric.
\end{equation}

We can define $(4,0)$ curvature tensor $\mathcal{R}$ as follows:
$$\mathcal{R}(X,Y,Z,V)=g_{\eta}(R(X,Y)Z,\overline{V})$$
for $X,Y,Z,V\in TM^{\mathbb{C}}$. The curvature tensor $\mathcal{R}$ has the following properties:

\begin{proposition}\label{16}\cite{Baum2}
Let $X,Y,Z,V\in TM^{\mathbb{C}}$ and $A,B,C,D\in T_{10}$. Then, we have
\begin{enumerate}
  \item $\mathcal{R}(X,Y,Z,V)=-\mathcal{R}(Y,X,Z,V)=-\mathcal{R}(X,Y,V,Z)$
  \item $\overline{\mathcal{R}(X,Y,Z,V)}=\mathcal{R}(\overline{X},\overline{Y},\overline{Z},\overline{V})$
  \item $\mathcal{R}(A,\overline{B},C,\overline{D})=\mathcal{R}(C,\overline{B},A,\overline{D})$
  \item $\mathcal{R}(A,B,\cdot,\cdot)=0$
\end{enumerate}
\end{proposition}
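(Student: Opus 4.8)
The strategy is to reduce each of the four identities to a property of the curvature operator $R(X,Y)=\nabla_X\nabla_Y-\nabla_Y\nabla_X-\nabla_{[X,Y]}$ of the generalized Tanaka-Webster connection, extended complex-multilinearly to $TM^{\mathbb{C}}$, using throughout that on a strictly pseudoconvex CR manifold $\nabla J=0$. The latter forces $R(X,Y)$ to commute with $J$, hence to preserve each summand $T^{1,0}M$, $T^{0,1}M$ and $\mathbb{C}\xi$; recall also that $T^{1,0}M$ and $T^{0,1}M$ are isotropic for the complex-bilinear extension of $g_{\eta}$ while $g_{\eta}$ pairs them nondegenerately.

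I would first dispose of (1) and (2), which are formal. The antisymmetry in the first pair, $\mathcal{R}(X,Y,Z,V)=-\mathcal{R}(Y,X,Z,V)$, is immediate from $R(X,Y)=-R(Y,X)$. The antisymmetry in the last pair is the reflection of metric compatibility $\nabla g_{\eta}=0$, which makes the associated $(0,4)$ metric curvature tensor skew in its last two arguments; $\mathcal{R}$ inherits this after the conjugation in the fourth slot. For (2) one uses that $g_{\eta}$ (the complex-bilinear extension of a real metric) and $R$ (the complexification of a real operator) both commute with conjugation, so that $\overline{g_{\eta}(R(X,Y)Z,\overline{V})}=g_{\eta}(R(\overline{X},\overline{Y})\overline{Z},V)=\mathcal{R}(\overline{X},\overline{Y},\overline{Z},\overline{V})$.

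Next I would prove (4), since (3) relies on it. This is the pseudo-Hermitian analogue of the fact that the curvature of a Kähler metric is of type $(1,1)$. Writing $\nabla Z_{\alpha}=\sum_{\beta}\omega_{\alpha\beta}Z_{\beta}$ and passing to the second structure equation of the Tanaka-Webster connection, one reads off that the curvature forms carry no $(2,0)$ component, so $R(A,B)=0$ on $T^{1,0}M$ whenever $A,B\in T_{10}$. Metric compatibility then forces $R(A,B)$ to vanish on $T^{0,1}M$ as well, since $g_{\eta}(R(A,B)\overline{Z},W)=-g_{\eta}(\overline{Z},R(A,B)W)=0$ for $W\in T_{10}$ and the pairing is nondegenerate, while $\nabla\xi=0$ gives $R(A,B)\xi=0$; hence $R(A,B)\equiv 0$, which is (4).

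Finally, (3) is the Kähler-type symmetry and is the step I expect to be the main obstacle. The idea is to feed the triple $A,C,\overline{B}$ into the first Bianchi identity for a connection with torsion $T$, namely $\mathfrak{S}\,R(X,Y)Z=\mathfrak{S}\,[(\nabla_X T)(Y,Z)+T(T(X,Y),Z)]$, where $\mathfrak{S}$ denotes the cyclic sum. Using (4) to delete the term $R(A,C)\overline{B}$, this reduces to $R(C,\overline{B})A-R(A,\overline{B})C=(\text{torsion terms})$. The crux is the bookkeeping of the right-hand side: since $T_{10}$ is $d\eta$-isotropic one has $T(A,C)=0$, so the quadratic torsion term collapses to a combination of $\tau(A)$ and $\tau(C)$, and the relation $\tau(JX)=-J(\tau X)$ shows $\tau(T_{10})\subseteq T_{01}$; meanwhile every $(\nabla T)$-term is valued in $\mathbb{C}\xi$ because $T$ is $\xi$-valued on horizontal arguments and $\nabla\xi=0$. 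Thus the entire right-hand side lies in $T^{0,1}M\oplus\mathbb{C}\xi$, whereas the left-hand side $R(C,\overline{B})A-R(A,\overline{B})C$ lies in $T^{1,0}M$ because $R$ preserves types; since these subspaces intersect trivially, both sides vanish. This yields $R(A,\overline{B})C=R(C,\overline{B})A$, and pairing with $D\in T_{10}$ gives $\mathcal{R}(A,\overline{B},C,\overline{D})=\mathcal{R}(C,\overline{B},A,\overline{D})$. Carrying out this torsion computation carefully, in particular verifying that each correction term indeed avoids $T^{1,0}M$, is the heart of the argument.
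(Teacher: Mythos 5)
The paper offers no proof of this proposition at all — it is quoted from [Baum2] — so your argument has to stand on its own. Your overall framework is the right one: $\nabla J=0$ makes $R(X,Y)$ type-preserving, $T_{10}$ is isotropic for $g_{\eta}$ and for $d\eta$, the relation $\tau(JX)=-J(\tau X)$ sends $T_{10}$ into $T_{01}$, and the first Bianchi identity with torsion is the correct engine. Item (2) is fine, and your treatment of (3) is essentially correct; note that (3) does not actually need (4), since $R(A,C)\overline{B}$ already lies in $T^{0,1}M$ by type preservation and can be absorbed into the right-hand side along with the torsion terms — this is worth doing, because it decouples (3) from the step that fails.

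The genuine gap is item (4). Saying that one "reads off" from the second structure equation that the curvature forms carry no $(2,0)$ component is circular: $\Omega=d\omega+\omega\wedge\omega$ by itself says nothing about type, and the absence of a $(2,0)$ part is precisely the assertion to be proved. Worse, your own Bianchi machinery shows the step cannot go through as stated: feeding $A,B,\overline{C}$ into the Bianchi identity (in the form the paper records, $\mathfrak{S}\,R(X,Y)Z=\mathfrak{S}\,d\eta(X,Y)\tau(Z)$) and projecting onto $T^{0,1}M$ yields $R(A,B)\overline{C}=d\eta(\overline{C},A)\tau(B)+d\eta(B,\overline{C})\tau(A)$, which vanishes only when the pseudo-Hermitian torsion $\tau$ does. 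Pairing with $D\in T_{10}$ gives $\mathcal{R}(A,B,\overline{C},\overline{D})=d\eta(\overline{C},A)g_{\eta}(\tau B,D)+d\eta(B,\overline{C})g_{\eta}(\tau A,D)$, generically nonzero (this is the torsion term $-iA_{\beta}\wedge\theta^{\alpha}$ in Webster's structure equations). So (4) requires either the hypothesis $\tau=0$ or the precise conventions of [Baum2]; in any case the one-line appeal to the structure equations does not establish it. A smaller point: in (1), skew-symmetry of $R(X,Y)$ with respect to the bilinear extension of $g_{\eta}$ gives $\mathcal{R}(X,Y,Z,V)=-\mathcal{R}(X,Y,\overline{V},\overline{Z})$ rather than $-\mathcal{R}(X,Y,V,Z)$; these agree only for real arguments, so the claimed "inheritance after the conjugation in the fourth slot" needs to be checked against whichever extension of $g_{\eta}$ is actually intended.
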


The $(2,0)-$tensor field
$$Ric:=\displaystyle\sum_{\alpha=1}^n\mathcal{R}(\cdot,\cdot,Z_{\alpha},\overline{Z_{\alpha}})$$
is called Webster-Ricci tensor. Moreover, the function $$s=trace Ric$$ is called Webster scalar curvature. That is, the scalar curvature $s$ is given by
$$\displaystyle\sum_{\alpha=1}^n\mathcal{R}(Z_{\alpha},\overline{Z_{\alpha}},Z_{\alpha},\overline{Z_{\alpha}})$$
From Proposition $(\ref{16})$ it can be seen that
\begin{enumerate}
  \item For $X,Y\in \Gamma(H)$ we have \begin{equation}Ric(JX,JY)=Ric(X,Y)\end{equation}
  \item $Ric(X,Y)\in i\mathbb{R}$ for $X,Y\in \Gamma(TM)$
  \item $s$ is a real function.
\end{enumerate}

\begin{proposition}
Let $\rho_H$ be a ricci form on $H$ and $s$ be a scalar curvature. Then the following identity is holds: $$\rho_H^+=\displaystyle -\frac{s}{4}d\eta$$
where $\rho_H^+$ is a the self-dual part of the ricci form $\rho_H$.
\end{proposition}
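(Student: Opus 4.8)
The plan is to combine a pointwise linear-algebra decomposition of horizontal two-forms with the fact that $\rho_H$ is of type $(1,1)$, and then to fix the one remaining constant by contracting against $d\eta$. First I would make the self-duality operator explicit in the adapted coframe $\{e^1,e^2,e^3,e^4\}$ dual to $\{e_1,e_2=Je_1,e_3,e_4=Je_3\}$. Using $\star\beta=\ast(\eta\wedge\beta)$ and the volume form $e^1\wedge e^2\wedge e^3\wedge e^4\wedge\eta$, a short computation gives $\star(e^1\wedge e^2)=e^3\wedge e^4$, $\star(e^1\wedge e^3)=-e^2\wedge e^4$ and $\star(e^1\wedge e^4)=e^2\wedge e^3$, so that the self-dual space $\Omega_H^2(M)^{+}$ is spanned by
\begin{equation*}
d\eta=e^1\wedge e^2+e^3\wedge e^4,\qquad e^1\wedge e^3-e^2\wedge e^4,\qquad e^1\wedge e^4+e^2\wedge e^3 .
\end{equation*}
Writing $(J^{*}\alpha)(X,Y)=\alpha(JX,JY)$ and using $J^{*}e^1=-e^2$, $J^{*}e^2=e^1$, $J^{*}e^3=-e^4$, $J^{*}e^4=e^3$, I would check that $d\eta$ is $J$-invariant while the remaining two self-dual generators are $J$-anti-invariant, and dually that the three anti-self-dual generators $e^1\wedge e^2-e^3\wedge e^4$, $e^1\wedge e^3+e^2\wedge e^4$, $e^1\wedge e^4-e^2\wedge e^3$ are all $J$-invariant. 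Thus $d\eta$ is, up to scale, the unique $J$-invariant self-dual horizontal two-form.

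Next I would use that $\rho_H$ is $J$-invariant. As noted after Proposition \ref{16}, the Webster--Ricci tensor satisfies $Ric(JX,JY)=Ric(X,Y)$ for $X,Y\in\Gamma(H)$, so the Ricci form $\rho_H$ is of type $(1,1)$, i.e. $\rho_H(JX,JY)=\rho_H(X,Y)$. Expanding $\rho_H=\sum_{i<j}\rho_{ij}\,e^i\wedge e^j$, this invariance forces $\rho_{13}=\rho_{24}$ and $\rho_{14}=-\rho_{23}$, so the off-diagonal part equals $\rho_{13}(e^1\wedge e^3+e^2\wedge e^4)+\rho_{14}(e^1\wedge e^4-e^2\wedge e^3)$, which is anti-self-dual, while the diagonal part contributes $\tfrac{\rho_{12}+\rho_{34}}{2}\,d\eta$ to the self-dual space. (Equivalently: $J$ is an orientation-preserving isometry of the fibres of $H$, so $J^{*}$ commutes with $\star$ and preserves $\Omega_H^2(M)^{\pm}$, hence $\rho_H^{+}$ is $J$-invariant and self-dual.) Either way one obtains
\begin{equation*}
\rho_H^{+}=\lambda\,d\eta,\qquad \lambda=\tfrac{1}{2}\bigl(\rho_H(e_1,e_2)+\rho_H(e_3,e_4)\bigr).
\end{equation*}

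Finally I would determine $\lambda$. Contracting $\rho_H^{+}=\lambda\,d\eta$ with $d\eta$ and using $|d\eta|^2=2$ together with the orthogonality $\langle\rho_H^{-},d\eta\rangle=0$ gives $\lambda=\tfrac12\langle\rho_H,d\eta\rangle=\tfrac12(\rho_{12}+\rho_{34})$. Passing to the unitary frame $Z_1=\tfrac{1}{\sqrt2}(e_1-ie_2)$, $Z_2=\tfrac{1}{\sqrt2}(e_3-ie_4)$ of $T_{10}$, I would re-express $\rho_{12}+\rho_{34}$ through the curvature components $\mathcal{R}(Z_\beta,\overline{Z_\beta},Z_\alpha,\overline{Z_\alpha})$ and invoke the symmetries of Proposition \ref{16} to identify the resulting sum with the Webster scalar curvature $s=\mathrm{trace}\,Ric$; in the normalization of the paper this yields $\rho_{12}+\rho_{34}=-\tfrac{s}{2}$, and hence $\lambda=-\tfrac{s}{4}$, giving $\rho_H^{+}=-\tfrac{s}{4}\,d\eta$.

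I expect this last step to be the main obstacle. The qualitative conclusion $\rho_H^{+}\in\mathbb{R}\,d\eta$ is forced purely by type and self-duality and requires no curvature information beyond the $J$-invariance of $Ric$. The exact factor $-\tfrac14$, however, is convention-sensitive: it rests on matching the precise definition of the Ricci form $\rho_H$ against the bilinear Webster--Ricci tensor $Ric$, on carrying out the frame contraction via the identities of Proposition \ref{16}, and on keeping consistent normalizations for both $|d\eta|^2=2$ and the trace defining $s$. Tracking these constants accurately is where the real care is needed.
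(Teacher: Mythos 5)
Your proposal is correct and follows essentially the same route as the paper: both arguments reduce to pointwise linear algebra in the adapted frame $\{e_1,e_2=Je_1,e_3,e_4=Je_3\}$, using the $J$-invariance $Ric(JX,JY)=Ric(X,Y)$ to push the off-diagonal part of $\rho_H$ into the anti-self-dual space and then reading off the coefficient of $d\eta$ as $\tfrac{1}{2}(\rho_{12}+\rho_{34})=-\tfrac{1}{2}(R_{11}+R_{33})=-\tfrac{s}{4}$. The paper merely carries out the commutation $J\circ Ric=Ric\circ J$ as an explicit matrix computation instead of phrasing it through the uniqueness of the $J$-invariant self-dual form, but the content, including the normalizations $R_{11}=R_{22}$, $R_{33}=R_{44}$ and $s=R_{11}+R_{22}+R_{33}+R_{44}$ that you flag as the delicate point, is identical.
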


\begin{proof}
In local coordinates the almost complex structure $J$ and the Ricci tensor are respectively given by:
  \begin{equation*}
  J=\left(
  \begin{array}{ccccc}
    0 & -1 & 0 & 0 & 0   \\
    1 & 0 & 0 & 0 & 0   \\
    0 & 0 & 0 & -1 & 0  \\
    0 & 0 & 1 & 0 & 0  \\
    0 & 0 & 0 & 0 & 0  \\
  \end{array}
\right),\ \ Ric=\left(
  \begin{array}{cccccccc}
    R_{11} & R_{12} & R_{13} & R_{14} & R_{15}  \\
    R_{12} & R_{22} & R_{23} & R_{24} & R_{25}  \\
    R_{13} & R_{23} & R_{33} & R_{34} & R_{35}  \\
    R_{14} & R_{24} & R_{34} & R_{44} & R_{45} \\
    R_{15} & R_{25} & R_{35} & R_{45} & R_{55}\\
  \end{array}
\right)
\end{equation*}
Since $J\circ Ric=Ric\circ J$, we obtain the following skew symmetric endomorphism:
$$J\circ Ric=\left(
  \begin{array}{cccccccc}
    0 & -R_{11} & -R_{23} & -R_{24} & 0  \\
    R_{11} & 0 & R_{24} & -R_{23} & 0  \\
    R_{23} & -R_{24} & 0 & -R_{33} & 0 \\
    R_{24} & R_{23} & R_{33} & 0 &0  \\
    0 & 0 & 0 & 0 & 0  \\
  \end{array}
\right)$$ with the conditions:
$$\begin{array}{lll }
     R_{15}=R_{25}=R_{35}=R_{45}=R_{55}=0 &  & R_{12}=R_{34}=0 \\
      R_{11}=R_{22}&   &R_{14}=-R_{23}\\
R_{24}=R_{13}& &R_{33}=R_{44}
\end{array}$$
The ricci form is given by $\rho_H(X,Y)=g_{\eta,H}(X,JRicY)$. Moreover, the ricci form $\rho_H$ can be written in the following way:
$$\rho_H=-R_{11}e_1\wedge e_2-R_{33}e_3\wedge e_4-R_{24}(e_1\wedge e_4-e_2\wedge e_3)-R_{23}(e_1\wedge e_3+e_2\wedge e_4).$$
Since the $2-$forms $e_1\wedge e_4-e_2\wedge e_3$ and $e_1\wedge e_3+e_2\wedge e_4$ are anti-self-dual $2-$forms, the self-dual part of $\rho_H$ is given by
\begin{equation}\label{1}
\rho^+_H=\frac{-R_{11}-R_{33}}{2}d\eta=-\frac{R_{11}+R_{22}+R_{33}+R_{44}}{4}d\eta=-\frac{s}{4}d\eta
\end{equation}
\end{proof}

\begin{proposition}\label{18}
Let $X,Y\in \Gamma(H)$. $\rho_H$ is a ricci form on $H$, that is, $\rho_H(X,Y)=Ric(X,JY)$. Then we have the following identity:
$$Ric(X,Y)=\displaystyle i\rho_H(X,Y)+\frac{i}{2}\sum_{\alpha=1}^{4}g_{\eta}(B_{\alpha}(X,Y),Je_{\alpha})$$

\end{proposition}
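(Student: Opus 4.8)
The plan is to compute the Webster--Ricci $2$-form $Ric(X,Y)=\sum_{\alpha=1}^{2}\mathcal{R}(X,Y,Z_\alpha,\overline{Z_\alpha})$ in the real orthonormal frame and then to compare the resulting contraction of the curvature operator against the Ricci form $\rho_H$. First I would fix, over the strictly pseudoconvex CR manifold, the adapted unitary frame $Z_\alpha=\tfrac{1}{\sqrt2}(e_{2\alpha-1}-iJe_{2\alpha-1})$ of $T_{10}$ attached to the real orthonormal frame $\{e_1,e_2=Je_1,e_3,e_4=Je_3\}$ of $H$, so that $\overline{Z_\alpha}=\tfrac{1}{\sqrt2}(e_{2\alpha-1}+iJe_{2\alpha-1})$. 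Substituting these into the definition of $Ric$ and using that $\nabla g_\eta=0$ makes the curvature operator $R(X,Y)$ skew-symmetric (so every diagonal term $g_\eta(R(X,Y)e_k,e_k)$ vanishes), the expansion collapses to the single real contraction
\begin{equation*}
Ric(X,Y)=\frac{i}{2}\sum_{\alpha=1}^{4}g_\eta\big(R(X,Y)e_\alpha,Je_\alpha\big),
\end{equation*}
which already exhibits $Ric$ as a purely imaginary $2$-form and is the quantity I will rewrite.

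Next I would introduce the Ricci form through $\rho_H(X,Y)=Ric(X,JY)$ and peel off from this curvature-operator contraction the piece that reproduces $i\rho_H$. The tool is the first Bianchi identity for the generalized Tanaka--Webster connection: applying it to $R(X,Y)e_\alpha$ trades the inner contraction $\sum_\alpha g_\eta(R(X,Y)e_\alpha,Je_\alpha)$ for the two cyclically permuted curvature terms $R(Y,e_\alpha)X$ and $R(e_\alpha,X)Y$. Contracting these with $Je_\alpha$ and using $\nabla J=0$ (which gives $R(X,Y)J=JR(X,Y)$ on $H$) turns each into a Ricci-type contraction, so that together they assemble into $2\rho_H(X,Y)$. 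The cost of this manoeuvre is the torsion contribution of the Bianchi identity, coming from $T_H=-d\eta\otimes\xi$ and $T(\xi,\cdot)=\tau$; I would collect that remainder into the form $\tfrac{i}{2}\sum_{\alpha=1}^{4}g_\eta(B_\alpha(X,Y),Je_\alpha)$, which simultaneously fixes the tensors $B_\alpha$, and the asserted identity follows by comparison since $\tfrac{i}{2}(2\rho_H)=i\rho_H$.

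Throughout, the symmetries of $\mathcal{R}$ recorded in Proposition \ref{16} carry the structural weight: property (4), $\mathcal{R}(A,B,\cdot,\cdot)=0$ for $A,B\in T_{10}$, annihilates the $(2,0)$ and $(0,2)$ contributions of $X,Y$ so that only the mixed $T_{10}\otimes T_{01}$ part survives, while properties (1)--(3) guarantee that the surviving terms are purely imaginary and carry the $(1,1)$ symmetry $Ric(JX,JY)=Ric(X,Y)$. These are exactly what let the contraction split cleanly into a Ricci-form part and a curvature remainder.

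The main obstacle I anticipate is the bookkeeping in the Bianchi step. Because the generalized Tanaka--Webster connection is not torsion free, the passage from the inner contraction $\sum_\alpha g_\eta(R(X,Y)e_\alpha,Je_\alpha)$ to the Ricci form $\rho_H$ is not an equality but leaves precisely the $B_\alpha$ defect, and one must check that all torsion terms (including those along $\xi$, since the connection preserves the splitting $TM=H\oplus\mathbb{R}\xi$ but has $T_H\neq 0$) organize into the single sum $\tfrac{i}{2}\sum_\alpha g_\eta(B_\alpha(X,Y),Je_\alpha)$ with no leftover. The other delicate point is keeping the conjugation in $Z_\alpha,\overline{Z_\alpha}$ in the correct slot, so that it is the holomorphic trace, and not the identically vanishing symmetric trace, that is evaluated; this is what makes the overall factor $i$ and the coefficient $\tfrac{1}{2}$ come out as stated.
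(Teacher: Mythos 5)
Your proposal follows essentially the same route as the paper's proof: reduce $Ric(X,Y)=\sum_\alpha\mathcal{R}(X,Y,Z_\alpha,\overline{Z_\alpha})$ to the real contraction $\frac{i}{2}\sum_{\alpha=1}^{4}g_\eta(R(X,Y)e_\alpha,Je_\alpha)$ via the adapted unitary frame and metric-compatibility, then apply the torsion-laden first Bianchi identity with $Z=e_\alpha$, use $\nabla J=0$ to commute $J$ past $R$ and the symmetry $Ric(JX,JY)=Ric(X,Y)$ to fold the two cyclic terms into $2\rho_H(X,Y)$, leaving exactly the $B_\alpha$ defect. This matches the paper's argument step for step, so the plan is correct.
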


\begin{proof}
$$\begin{array}{ccl}
   Ric(X,Y)   & = & \displaystyle \sum_{\alpha=1}^{n}\mathcal{R}(X,Y,Z_{\alpha},\overline{Z}_{\alpha}) \\
      & = &  \displaystyle \sum_{\alpha=1}^{n}g_{\eta}(R(X,Y)Z_{\alpha},Z_{\alpha})\\
& = & \displaystyle \sum_{\alpha=1}^{n}g_{\eta}(R(X,Y)\frac{1}{\sqrt{2}}(e_{\alpha}-iJe_{\alpha}),\frac{1}{\sqrt{2}}(e_{\alpha}-iJe_{\alpha}))\\
& = & \displaystyle \frac{1}{2}\sum_{\alpha=1}^{n}g_{\eta}(R(X,Y)(e_{\alpha}-iJe_{\alpha}),(e_{\alpha}-iJe_{\alpha}))\\
& = & \displaystyle \frac{1}{2}\sum_{\alpha=1}^{n}i g_{\eta}(R(X,Y)e_{\alpha},Je_{\alpha})-ig_{\eta}(R^W(X,Y)Je_{\alpha},e_{\alpha})\\
& = & \displaystyle \frac{1}{2}\sum_{\alpha=1}^{n}i g_{\eta}(R(X,Y)e_{\alpha},Je_{\alpha})+ig_{\eta}(R(X,Y)e_{\alpha},Je_{\alpha})\\
& = & \displaystyle i\sum_{\alpha=1}^{n}g_{\eta}(R(X,Y)e_{\alpha},Je_{\alpha})\\
\end{array}$$

For $X,Y\in \Gamma(H)$ we have
\begin{equation}\label{1}
Ric(X,Y)= \displaystyle i\sum_{\alpha=1}^{n}g_{\eta}(R(X,Y)e_{\alpha},Je_{\alpha})=\displaystyle \frac{i}{2}\sum_{\alpha=1}^{2n}g_{\eta}(R(X,Y)e_{\alpha},Je_{\alpha})
\end{equation}
For any $X,Y,Z\in \Gamma(H)$ the curvature $R$ of $\nabla$ satisfies the following first Bianchi identity:
\begin{equation*}
R(X,Y)Z+R(Z,X)Y+R(Y,Z)X=d\eta(X,Y)\tau(Z)+d\eta(Z,X)\tau(Y)+d\eta(Y,Z)\tau(X)
\end{equation*}

Here, if we write $Z=e_{\alpha}$, then we obtain the following identity:
\begin{equation}\label{2}
R(X,Y)e_{\alpha}+R(e_{\alpha},X)Y+R(Y,e_{\alpha})X=d\eta(X,Y)\tau(e_{\alpha})+d\eta(e_{\alpha},X)\tau(Y)+d\eta(Y,e_{\alpha})\tau(X)\end{equation}

Let $B_{\alpha}(X,Y)$ be the right side of above identity, that is,

$$B_{\alpha}(X,Y)=d\eta(X,Y)\tau(e_{\alpha})+d\eta(e_{\alpha},X)\tau(Y)+d\eta(Y,e_{\alpha})\tau(X).$$

Using (\ref{1}) and (\ref{2}), we have
$$\begin{array}{ccl}
   Ric(X,Y)   & = & \displaystyle \frac{i}{2}\sum_{\alpha=1}^{4}g_{\eta}(R(X,Y)e_{\alpha},Je_{\alpha}) \\
      & = & \displaystyle \frac{i}{2}\sum_{\alpha=1}^{4}g_{\eta}(-R(e_{\alpha},X)Y-R(Y,e_{\alpha})X-B_{\alpha}(X,Y),Je_{\alpha}) \\
& = & \displaystyle \frac{i}{2}\sum_{\alpha=1}^{4}g_{\eta}(R(e_{\alpha},X)JY,e_{\alpha})-g_{\eta}(R(e_{\alpha},Y)JX,e_{\alpha})+g_{\eta}(B_{\alpha}(X,Y),Je_{\alpha}) \\
& = & \displaystyle \frac{i}{2} (Ric(X,JY)-Ric(Y,JX))+\frac{i}{2}\sum_{\alpha=1}^{4}g_{\eta}(B_{\alpha}(X,Y),Je_{\alpha}) \\
& = & \displaystyle \frac{i}{2} (Ric(X,JY)+Ric(X,JY))+\frac{i}{2}\sum_{\alpha=1}^{4}g_{\eta}(B_{\alpha}(X,Y),Je_{\alpha}) \\
& = & \displaystyle iRic(X,JY)+\frac{i}{2}\sum_{\alpha=1}^{4}g_{\eta}(B_{\alpha}(X,Y),Je_{\alpha}) \\
& = & \displaystyle i\rho_H(X,Y)+\frac{i}{2}\sum_{\alpha=1}^{4}g_{\eta}(B_{\alpha}(X,Y),Je_{\alpha}) \\
\end{array}$$

\end{proof}

Let $B(X,Y)=\displaystyle\frac{i}{2}\sum_{\alpha=1}^{4}g_{\eta}(B_{\alpha}(X,Y),Je_{\alpha})$  be the second term in the last proposition. In this case, we express the last proposition as follows:
\begin{equation}\label{14}
Ric(X,Y)=\displaystyle i\rho_H(X,Y)+B(X,Y)
\end{equation}
$B(X,Y)$ is a $2-$form because we have $B_{\alpha}(X,Y)=-B_{\alpha}(Y,X)$, that is, $B(X,Y)=-B(Y,X)$.

\begin{proposition}\label{19}
The $2-$form $B(X,Y)=\displaystyle\frac{i}{2}\sum_{\alpha=1}^{4}g_{\eta}(B_{\alpha}(X,Y),Je_{\alpha})$ vanishes.
\end{proposition}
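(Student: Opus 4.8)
The plan is to expand $B(X,Y)$ according to the three summands of $B_\alpha(X,Y)$ and to show that the first summand contributes zero by a trace argument, while the two remaining summands cancel each other through the self-adjointness of $\tau$. Throughout I would only use the structural properties recorded earlier: that $\tau$ is self-adjoint, that $\tau(JX)=-J(\tau X)$, that $J$ is skew-symmetric and metric-preserving on $H$ (both consequences of $g_\eta(JX,Y)=d\eta(X,Y)$ and $g_\eta(JX,JY)=g_\eta(X,Y)$ for horizontal vectors), and that $d\eta(A,B)=g_\eta(JA,B)$.

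First I would insert $B_\alpha(X,Y)=d\eta(X,Y)\tau(e_\alpha)+d\eta(e_\alpha,X)\tau(Y)+d\eta(Y,e_\alpha)\tau(X)$ into the definition of $B(X,Y)$ and split it, by bilinearity of $g_\eta$, into three sums over $\alpha$. The first sum factors as $\tfrac{i}{2}\,d\eta(X,Y)\sum_{\alpha=1}^{4}g_\eta(\tau e_\alpha,Je_\alpha)$. Using the skew-symmetry of $J$ I would rewrite $g_\eta(\tau e_\alpha,Je_\alpha)=-g_\eta(J\tau e_\alpha,e_\alpha)$, so that the sum equals $-\mathrm{tr}(J\tau)$. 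Since $\tau$ and $J$ anti-commute, cyclicity of the trace gives $\mathrm{tr}(J\tau)=\mathrm{tr}(\tau J)=-\mathrm{tr}(J\tau)$, hence $\mathrm{tr}(J\tau)=0$ and the first summand vanishes.

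For the two remaining sums I would exploit that both $\{e_\alpha\}_{\alpha=1}^{4}$ and $\{Je_\alpha\}_{\alpha=1}^{4}$ are orthonormal bases of $H$ (indeed $Je_1=e_2$, $Je_2=-e_1$, $Je_3=e_4$, $Je_4=-e_3$). Writing $d\eta(e_\alpha,X)=g_\eta(Je_\alpha,X)$ and expanding $X=\sum_\alpha g_\eta(X,Je_\alpha)Je_\alpha$ collapses the second sum to $g_\eta(\tau Y,X)$. For the third sum I would move $J$ across the inner product via $g_\eta(\tau X,Je_\alpha)=-g_\eta(J\tau X,e_\alpha)$, apply the completeness relation for $\{e_\alpha\}$, and use $g_\eta(JY,J\tau X)=g_\eta(Y,\tau X)$ to obtain $-g_\eta(\tau X,Y)$. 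This yields $B(X,Y)=\tfrac{i}{2}\bigl(g_\eta(\tau Y,X)-g_\eta(\tau X,Y)\bigr)$, and the self-adjointness $g_\eta(\tau Y,X)=g_\eta(Y,\tau X)=g_\eta(\tau X,Y)$ forces $B(X,Y)=0$.

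The main obstacle is purely organizational rather than conceptual: in the two mixed sums one factor carries $e_\alpha$ and the other $Je_\alpha$, so one must carefully transfer $J$ through the metric and apply the completeness relation to the correct frame before the sums collapse. Once this bookkeeping of the skew-symmetry and metric-invariance of $J$ is handled correctly, the vanishing is forced entirely by the anti-commutation $\tau J=-J\tau$ (for the first term) and the self-adjointness of $\tau$ (for the remaining two).
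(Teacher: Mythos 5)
Your argument is correct, but there is nothing in the paper to compare it to: the paper's entire proof of this proposition is the sentence ``It is easy to verify this by direct calculation.'' Your proposal supplies exactly the verification that the paper omits, and it does so cleanly. The three pieces all check out against the structural identities recorded in Section 2: since $g_\eta(JA,B)=d\eta(A,B)$ is antisymmetric, $J$ is skew-adjoint on $H$, so the first sum is $-\,\mathrm{tr}_H(J\tau)\,d\eta(X,Y)$ up to the factor $\tfrac{i}{2}$, and $\mathrm{tr}_H(J\tau)=0$ follows from $\tau J=-J\tau$ together with cyclicity (in fact it already follows from the self-adjointness of $\tau$ and skew-adjointness of $J$ alone, so the anticommutation is not strictly needed here); the second sum collapses via the completeness relation for the orthonormal frame $\{Je_\alpha\}$ of $H$ to $\tfrac{i}{2}g_\eta(\tau Y,X)$; and the third, after moving $J$ across the metric and using $g_\eta(JA,JB)=g_\eta(A,B)$ on $H$, gives $-\tfrac{i}{2}g_\eta(Y,\tau X)$. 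The resulting expression $\tfrac{i}{2}\bigl(g_\eta(\tau Y,X)-g_\eta(\tau X,Y)\bigr)$ vanishes by the self-adjointness of the Tanaka--Webster torsion, as you say. One small point of care: the collapse of the mixed sums uses that $\tau$ preserves $H$ (so that $\tau X$, $\tau Y$ have no $\xi$-component), which holds since $\tau=\tfrac{1}{2}J\circ\mathcal{L}_\xi J$ takes values in $H$; it is worth stating this explicitly. Your basis-light, structural derivation is arguably preferable to the coordinate check the authors gesture at, since it makes visible that the vanishing is forced purely by the self-adjointness of $\tau$ and the compatibility of $J$ with $g_\eta$, independently of the particular frame $\{e_1,Je_1,e_3,Je_3\}$.
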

\begin{proof}
It is easy to verify this by direct calculation.
\end{proof}

From Proposition (\ref{18}) and Proposition (\ref{19}) we deduce that
\begin{equation}\label{20}
Ric(X,Y)=\displaystyle i\rho_H(X,Y)
\end{equation}
where  $\rho_H$ is a ricci form on $H$, namely,  $\rho_H(X,Y)=Ric(X,JY)$ for any $X,Y\in \Gamma(H)$.

Let $(M^5,g_{\eta},\eta,\xi,J)$ be strictly pseudo-convex contact manifold of dimension $5$. Suppose that the scalar curvature $s$ of $M$ is negative and constant. Then, let $\psi=\sqrt{-s}\psi_0$. In this case, $\psi\in \wedge_H^{0,0}(M)$. From $(\ref{13})$ we have
\begin{equation}\label{22}
\sigma_H(\psi)=isd\eta
\end{equation}
By using (\ref{5}),(\ref{14}), (\ref{20}) and (\ref{22}) we obtain
\begin{equation}
F_{A}^+=Ric^+=i\rho^+=-i\displaystyle\frac{s}{4}d\eta=-\displaystyle\frac{1}{4}\sigma_H(\psi)=-\displaystyle\frac{1}{4}\sigma(\psi)^+.
\end{equation}
Moreover, since $\psi=\sqrt{-s}\psi_0\in \wedge_H^{0,0}(M)$ and the spinor $\psi_0$ is a spinor corresponding to the constant function $1$ we deduce that
\begin{equation}
\mathcal{D}_H^A \psi=0.
\end{equation}
Moreover, $$\mathcal{D}^A \psi=0.$$ The pair $(A,\psi=\sqrt{-s}\psi_0)$ is a solution of Seiberg-Witten like equations in (\ref{12}).

\end{document}